\documentclass[12pt,reqno,twoside]{amsart}

\usepackage{amsthm,amsmath,amssymb,mathrsfs,amsbsy}
\usepackage{graphicx,epsfig,wrapfig,sidecap,subfig}
\usepackage[all]{xy}
\usepackage[usenames]{color}
\usepackage{hyperref}
\usepackage{bm}
\usepackage[capitalize]{cleveref}

\setlength{\textwidth}{400pt}
\setlength{\hoffset}{-1cm}
\setlength{\textheight}{570pt}
\setlength{\voffset}{-0.3cm}

\newcommand{\diag}{{\rm diag\,}}

\newcommand{\conv}{{\rm conv}}

\xyoption{dvips}
\xyoption{color}

\newif\ifdraft\drafttrue
\draftfalse
\usepackage{color}

\title[Intersections of diagonal orbits]{Intersections of diagonal orbits}
\date{}
\author{Omri N. Solan}
\address{School of Mathematical Sciences, Tel Aviv University, Israel}
\email{omrisola@mail.tau.ac.il}

\DeclareMathOperator{\SL}{SL}

\newcommand{\calA}{{\mathcal{A}}}
\newcommand{\calB}{{\mathcal{B}}}

\newcommand{\nin}{{{\notin}}}

\newcommand{\Z}{\mathbb{Z}}
\newcommand{\R}{\mathbb{R}}

\newcommand{\supp}{{\rm supp}\,}

\newcommand{\cov}{{\rm covol\, }}

\newcommand{\WR}{{\rm WR}}
\newcommand{\ST}{{\rm ST}}

\newcommand{\be}{\mathbf{e}}

\newcommand{\rk}{\mathrm{rank\,}}
\newcommand{\invdim}{\mathrm{invdim\,}}
\newcommand{\spann}{\mathrm{span\,}}
\newcommand{\stab}{\mathrm{stab}}
\newcommand{\Imm}{\mathrm{Im}}
\newcommand{\res}{\mathrm{res}}

\newcommand{\fE}{\mathfrak{E}}
\newcommand{\fF}{\mathfrak{F}}
\newcommand{\fG}{\mathfrak{G}}

\newcommand{\fU}{\mathfrak{U}}

\newcommand {\ignore}[1]  {}

\theoremstyle{plain}
\newtheorem{thm}{Theorem}[section]
\newtheorem{lem}[thm]{Lemma}
\newtheorem{prop}[thm]{Proposition}
\newtheorem{cor}[thm]{Corollary}

\newtheorem{conj}[thm]{Conjecture}

\theoremstyle{definition}
\newtheorem{definition}[thm]{Definition}

\newtheorem{remark}[thm]{Remark}

\numberwithin{equation}{section}
\swapnumbers

\begin{document}

\maketitle
\begin{center}
	\today
\end{center}
\begin{abstract}
	Let $A\subseteq SL_n(\R)$ group of diagonal matrices with positive diagonal, let $\ST_n\subseteq X_n:=\SL_n(\R)/\SL_n(\Z)$ be the set of stable lattices, and let $\WR_n\subseteq X_n$ be the set of well-rounded lattices.
	We prove that any $A$-orbit in $X_n$ intersects both $\ST_n$ and $\WR_n$.
\end{abstract}

\section{Introduction}

Let $A\subseteq SL_n(\R)$ be the diagonal subgroup and let $X_n:=\SL_n(\R)/\SL_n(\Z)$ be the space of lattices.
It is believed that Minkowski suggested the following conjecture:
\begin{conj}
	For every $\Lambda\in X_n$ and $p\in \R^n$ there exist $a\in A$ and $v \in \Lambda$ such that $\|a(p-v)\|\le {\sqrt{n}\over 2}$. 
\end{conj}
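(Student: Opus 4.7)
The plan is to use the paper's main result — that every $A$-orbit in $X_n$ meets the well-rounded set $\WR_n$ — in order to reduce Minkowski's conjecture to a compactness-plus-classification problem on $\WR_n$.

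First, by the AM--GM inequality applied coordinatewise, together with the constraint $\prod_i a_i = 1$ on $a \in A$,
\[
\inf_{a \in A} \|aq\| \;=\; \sqrt{n}\,\prod_{i=1}^{n} |q_i|^{1/n} \qquad \text{for every } q \in \R^n,
\]
so for a fixed pair $(\Lambda,p)$ the conjecture is equivalent to the assertion $\inf_{v \in \Lambda} \prod_i |p_i - v_i| \le 2^{-n}$. Crucially, the functional $(\Lambda,p) \mapsto \inf_{v \in \Lambda} \prod_i |p_i - v_i|$ is invariant under the diagonal action $(\Lambda,p) \mapsto (a\Lambda, ap)$, because $\prod_i a_i = 1$. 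Consequently it suffices to verify the bound for $(\Lambda,p)$ with $\Lambda \in \WR_n$: given an arbitrary $\Lambda$, the paper's theorem produces $a_0 \in A$ with $a_0 \Lambda \in \WR_n$, and the problem for $(\Lambda,p)$ becomes the problem for $(a_0\Lambda, a_0 p)$.

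Next, since $\WR_n \subset X_n$ is compact and each torus $\R^n/\Lambda$ is compact, the functional attains its supremum over $\{(\Lambda,p) : \Lambda \in \WR_n,\; p \in \R^n/\Lambda\}$ at some extremizer $(\Lambda_0, p_0)$. I would then analyze $(\Lambda_0, p_0)$ variationally: well-roundedness already forces the short vectors of $\Lambda_0$ to span $\R^n$, and criticality of the functional should likewise force many "deep holes" (values of $v$ simultaneously achieving the infimum), pinning down $\Lambda_0$ as highly symmetric. The model case is $\Lambda_0 = \Z^n$ with $p_0 = (\tfrac{1}{2}, \ldots, \tfrac{1}{2})$, for which $\inf_v \prod_i |p_i - v_i| = 2^{-n}$, showing the bound is sharp.

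The main obstacle is precisely this classification of extremizers. Minkowski's conjecture has withstood proof for over a century and is currently known only for $n \le 10$ through delicate, dimension-specific arguments of Hans-Gill, Raka, Sehmi, and others. The reduction to $\WR_n$ supplied by this paper cleanly recasts the conjecture as a compact optimization problem on a natural geometric set, but extracting the sharp constant $\sqrt{n}/2$ uniformly in $n$ requires a genuinely new classification of critical well-rounded lattices, input that the intersection techniques developed here do not, on their own, seem to provide.
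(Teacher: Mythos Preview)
The statement you are addressing is labeled in the paper as a \emph{conjecture} (Minkowski's conjecture), not a theorem; the paper neither claims nor provides a proof of it. The paper's contribution is Theorem~1.2 (every $A$-orbit meets $\WR_n$ and $\ST_n$), which feeds into the Remak--Davenport strategy but does not by itself resolve the conjecture.

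Your proposal is therefore not comparable to any proof in the paper, and in fact it is not a proof at all --- as you candidly acknowledge in your final paragraph. The reduction you sketch (AM--GM to pass to the $A$-invariant product functional, then the paper's intersection theorem to land in $\WR_n$, then compactness to obtain an extremizer) is exactly the classical Remak--Davenport scheme the paper summarizes in its introduction as statements $(W_n)$ and $(C_n)$. The paper's theorem establishes $(W_n)$ in full, but the remaining step --- your ``variational classification of extremizers'' --- is precisely the open content of the conjecture. The sentence ``criticality of the functional should likewise force many deep holes, pinning down $\Lambda_0$ as highly symmetric'' is a hope, not an argument; there is no mechanism offered that would rule out non-cubic extremizers. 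Moreover, the paper explicitly records that the naive covering-radius version $(C_n)$ is \emph{false} for $n\ge 30$ by Regev--Shapira--Weiss, so the most direct form of this strategy provably breaks. Your $A$-invariant product formulation is genuinely weaker than $(C_n)$ and is not killed by that counterexample, but nothing in your outline addresses why the extremizer analysis should become tractable in that form.
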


The conjecture was proved for $n\le 9$. The first proofs for $n\le 5$ used the following strategy, known as the Remak-Davenport approach.
Define the set of well-rounded lattices $\WR_n\subseteq X_n$ as the set of all lattices such that all the Minkowsi successive minima are equal. 
The Remak-Davenport approach states that to prove Minkowski's conjecture it is enough to prove the following two statements.
\begin{itemize}
	\item [$(W_n)$]  For every lattice $\Lambda\in X_n$ we have $A\Lambda\cap \WR_n\neq \emptyset$.
	\item [$(C_n)$] For every $\Lambda\in \WR_n$,
	\[\sup_{p\in \R^n}\inf_{v\in \Lambda}\|p-v\| \le \frac{\sqrt{n}}{2}.\] 
\end{itemize}

The cases $n=2, 3, 4, 5$ were proven by Minkowski \cite{M2}, Remak \cite{R}, Dyson \cite{D}, and Skubenko \cite{S},  respectively.

McMullen \cite{Mc} proved a weaker version of $(W_n)$, that, combined with a result of Birch and Swinnerton-Dyer \cite{BS}, demonstrated that if $(C_1), (C_2),...,(C_n)$ holds then Minkowski's Conjecture holds for $n$. 
Woods \cite{W} proved $(C_n)$ for $n=6$, and 
in \cite{HMS1}, \cite{HMS2}, and \cite{KR} Hans-Gill, Kathuria, Raka, and Sehmi proved $(C_n)$ for $n=7,8,9$. In particular, the Minkowski Conjecture indeed holds for $n\le 9$. 

Regev, Shapira, and Weiss \cite{RSW} proved that $(C_n)$ is false for $n\ge 30$, and therefore the Remak-Davenport approach is bound to fail. Shapira and Weiss \cite{SW} suggested a similar approach replacing the set of well-rounded lattices by the set of stable lattices (see Definition \ref{Stable} below).

As for $(W_n)$, McMullen \cite{Mc} proved that any bounded orbit closure $\overline{A\Lambda}\subseteq X$ intersects $\WR_n$. Levin, Shapira, and Weiss \cite{LSW} proved that every closed orbit $A\Lambda\subseteq X_n$ intersects $\WR_n$.
Shapira and Weiss \cite{SW} proved that every orbit closure $\overline{A\Lambda}\subseteq X$ intersects the set of stable lattices $\ST_n$, and concluded that the analog of $(C_n)$, when replacing $\WR_n$ by $\ST_n$, implies Minkowski's Conjecture.
In this paper we prove the following result, which strengthens results in \cite{Mc}, \cite{LSW}, and \cite{SW}.
\begin{thm}\label{thm:main}
	For every $\Lambda\in X_n$ the orbit $A\Lambda$ intersects $\ST_n$ and $\WR_n$ w.r.t. any norm.
\end{thm}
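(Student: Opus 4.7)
The plan is to imitate and strengthen the strategies of McMullen [Mc] (for $\WR_n$) and Shapira--Weiss [SW] (for $\ST_n$), but so that each produces a well-rounded/stable lattice in the orbit $A\Lambda$ itself rather than merely in its closure. The template, for each of the two claims, is: find a continuous, $A$-equivariant ``scoring function'' $F:X_n\to \R$ whose extrema over $A\Lambda$ characterize (a) being WR, respectively (b) being stable; observe that the extremum is attained on $\overline{A\Lambda}$ (this recovers the known closure statements); and then upgrade attainment-in-closure to attainment-in-orbit via a rigidity argument at the extremal lattice.

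For $\WR_n$ with respect to a norm $\|\cdot\|$, the natural scoring function is $F(\Lambda')=\lambda_1(\Lambda',\|\cdot\|)$, which is continuous and, by Minkowski's second theorem, uniformly bounded above on $X_n$. Let $s:=\sup_{a\in A}F(a\Lambda)\in(0,\infty)$ and pick $a_k\in A$ with $F(a_k\Lambda)\to s$. Since $F(a_k\Lambda)$ stays bounded away from $0$, Mahler's compactness criterion produces a subsequential limit $a_k\Lambda\to \Lambda_0$ in $X_n$; a standard first-variation argument then shows $\Lambda_0\in\WR_n$, for otherwise one could strictly increase $F$ by an infinitesimal $A$-perturbation. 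To get $\Lambda_0\in A\Lambda$ I would exploit the following rigidity: $\Lambda_0$ has $n$ linearly independent shortest vectors $v_1,\dots,v_n$. For large $k$, $a_k\Lambda$ has $n$ nearby linearly independent short vectors $v_i^{(k)}\to v_i$, and writing $v_i^{(k)}=a_k u_i^{(k)}$ with $u_i^{(k)}\in\Lambda$, the discreteness of $\Lambda$ forces $u_i^{(k)}$ to be eventually constant along a subsequence. Then the diagonal $a_k$, being determined on a basis of $\R^n$ by $a_k u_i\to v_i$, lies in a compact subset of $A$, so $a_k\to a_\infty\in A$ and $\Lambda_0=a_\infty\Lambda\in A\Lambda$.

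For $\ST_n$ the same template should apply, using the scoring function measuring the deviation of the Harder--Narasimhan polygon of $\Lambda'$ from linearity (the $A$-invariance of covolume makes this well-behaved), which is the function whose infimum Shapira--Weiss attain in [SW]. The orbit-attainment step is once again a rigidity statement at a stable $\Lambda_0$: the (finitely many, up to $\SL_n(\Z)$) sublattices $\Lambda_0'\subset\Lambda_0$ that realize the minimal covolume in each rank serve as witnesses. Along a minimizing sequence $a_k\Lambda\to\Lambda_0$, the corresponding rational subspaces of $\R^n$ are eventually constant, by discreteness of the $\Q$-Grassmannian, and this pins $a_k$ to a compact region of $A$.

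The main obstacle I anticipate is the rigidity step for $\ST_n$. Short vectors of a WR lattice give a concrete basis-level witness directly controlling $a_k$, but the ``witnesses'' for stability are rational subspaces; although discrete in the Grassmannian, the identification of which subspaces are extremal at $\Lambda_0$ versus at $a_k\Lambda$ may shift along $k$ in a subtle way, so the above ``eventually constant'' assertion needs justification. I would likely argue by induction on $n$: pass to the top piece of the HN-filtration of $\Lambda_0$, pull it back to a rational subspace in $\Lambda$ that is preserved asymptotically by $a_k$, and apply the theorem inductively to the induced lattices in this subspace and the quotient, finally patching a stable lattice in $A\Lambda$ from its graded pieces.
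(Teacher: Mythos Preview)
Your proposal has a genuine gap in the ``rigidity'' step for $\WR_n$, and the difficulty propagates to the $\ST_n$ case as well.

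You take a maximizing sequence $a_k\Lambda\to\Lambda_0$ with $\Lambda_0\in\WR_n$, choose short vectors $v_i^{(k)}\to v_i$, write $v_i^{(k)}=a_k u_i^{(k)}$ with $u_i^{(k)}\in\Lambda$, and assert that discreteness of $\Lambda$ forces $u_i^{(k)}$ to be eventually constant. But discreteness yields this only if the $u_i^{(k)}$ lie in a bounded set, and $u_i^{(k)}=a_k^{-1}v_i^{(k)}$ is bounded precisely when $a_k$ is bounded on the span of the $v_i^{(k)}$, which is the conclusion you are trying to reach. Concretely, if some diagonal entry of $a_k$ tends to $0$, the corresponding coordinate of $u_i^{(k)}$ can blow up even though $v_i^{(k)}$ converges. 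So the argument is circular: you use boundedness of $a_k$ to prove boundedness of $a_k$. The same circularity appears in your $\ST_n$ sketch: the ``rational subspaces are eventually constant'' claim requires the relevant wedge vectors $a_k^{-1}\omega_k\in\bigwedge^*\Lambda$ to stay bounded, which again presupposes control on $a_k$.

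This is not a technicality. The passage from closure to orbit is exactly the content of the theorem beyond \cite{Mc} and \cite{SW}, and it cannot be recovered by a compactness argument along a single extremizing sequence: for a generic $\Lambda$ the orbit $A\Lambda$ accumulates on every well-rounded lattice via sequences with $a_k\to\infty$ in $A$. The paper's proof avoids sequences altogether. It encodes the Minkowski (respectively Harder--Narasimhan) filtration as a bounded, discrete, lower locally invariant map $F:A\to\fF_n$ into measured flags, and proves (Theorem~\ref{thm:main2}) that any such $F$ is trivial at some $a\in A$; this in turn is deduced from a global covering theorem on $A\cong\R^{n-1}$ (Theorem~\ref{thm:topo}) via a \v{C}ech--de~Rham argument with boundedly supported forms. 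The point is that the obstruction to triviality is cohomological and forces an actual intersection in the orbit, not merely in its closure. Your variational/compactness template does not see this obstruction.
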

The proof is inspired by \cite{Mc} and is a combination of a topological claim and some lattice geometry.
To state the topological theorem, we need the concept of invariance dimension.
Recall that $\R^n$ acts on its subsets by translations.
\begin{definition}
	The {\em invariance dimension} of a convex open set $U\subseteq \R^n$ is the dimension of its stabilizer over $\R^n$, that is, \[\invdim U:=\dim \stab_{\R^n}(U).\]
	By convention $\invdim \emptyset:=-\infty$. 
\end{definition}
The topological result that we need and that extends theorem $5.1$ in \cite{Mc} is the following.
\begin{thm}\label{thm:topo}
	Let $\fU$ be an open cover of $\R^n$. Assume that \begin{enumerate}
		\item the cover \[\{\conv U:U\in \fU\}\] is locally finite;\footnotemark
		\item for every $k\le n$ and $k$ different sets $U_1,...,U_k\in \fU$ one has \[\invdim \conv (U_1\cap U_2\cap ...\cap U_k) \le n-k.\]
	\end{enumerate} Then there are $n+1$ sets in $\fU$ with nontrivial intersection. 
\end{thm}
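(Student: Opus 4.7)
We argue by contradiction. Assume no $n+1$ members of $\fU$ share a common point; then the nerve $\mathcal{N}(\fU)$ is a simplicial complex of dimension at most $n-1$. Following the template of McMullen's proof of Theorem $5.1$ in \cite{Mc}, we construct maps
\[
\R^n \xrightarrow{\ \varphi\ } |\mathcal{N}(\fU)| \xrightarrow{\ \psi\ } \R^n
\]
whose composition is homotopic to $\mathrm{id}_{\R^n}$ but factors through a space of topological dimension at most $n-1$, contradicting surjectivity.

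The map $\varphi$ is standard: choose a partition of unity $\{\phi_U\}_{U \in \fU}$ subordinate to $\fU$, which is well defined because $\{\conv U : U \in \fU\}$ is locally finite, and set $\varphi(x) := \sum_{U} \phi_U(x)\, v_U$ where $v_U$ is the vertex of $|\mathcal{N}(\fU)|$ corresponding to $U$.

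The map $\psi$ is the heart of the argument. We require it to be a \emph{carrier}: for every simplex $\sigma = [U_0, \ldots, U_k]$ of $\mathcal{N}(\fU)$, the image $\psi(\sigma)$ should lie in $C_\sigma := \conv(U_0 \cap \cdots \cap U_k)$, and $\psi|_\sigma$ should be piecewise affine with image of dimension at most $k$. We construct $\psi$ by induction on the skeleton. On vertices, pick $\psi(v_U) \in \conv U$. For the inductive step across a $k$-simplex $\sigma$, the hypothesis $\invdim C_\sigma \le n - k - 1$ means the stabilizer $V_\sigma$ of $C_\sigma$ has codimension at least $k+1$, so the projected set $C_\sigma / V_\sigma$ is a convex open subset of a space of dimension at least $k+1$. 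Since $C_\sigma$ is contractible and its faces satisfy $C_\tau \supseteq C_\sigma$ for $\tau \subsetneq \sigma$, the boundary map $\partial \sigma \to \bigcup_{\tau \subsetneq \sigma} C_\tau$ can be pushed into the smaller set $C_\sigma$ and then capped off across $\sigma$, the invariance-dimension bound providing exactly the transversal room needed to do this while keeping $\dim \psi(\sigma) \le k$.

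The composition $\psi \circ \varphi$ is homotopic to $\mathrm{id}_{\R^n}$ via the straight-line homotopy $H(x, t) := (1-t) x + t\, \psi(\varphi(x))$: both endpoints lie in the convex set $C_\sigma$ where $\sigma$ is the carrier simplex of $\varphi(x)$, so the whole segment does too. Local finiteness of $\{\conv U\}$ lets one truncate this homotopy into a compactly supported modification of the identity, so $\psi \circ \varphi$ retains local degree $1$ and is surjective onto $\R^n$. But $\psi(|\mathcal{N}(\fU)|)$ is a union of piecewise-affine images of simplices of dimension at most $n-1$ and so has topological dimension at most $n-1$, contradicting surjectivity.

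The hardest part is the inductive construction of $\psi$. The carrier condition $\psi(\sigma) \subseteq C_\sigma$ becomes more restrictive as $\sigma$ grows, since the $C_\sigma$ shrink with the number of vertices while the boundary image inherited from faces $\tau \subsetneq \sigma$ sits inside the \emph{larger} sets $C_\tau$; funneling it into the smaller $C_\sigma$ requires exactly the transversal dimension guaranteed by condition~(2). A secondary subtlety is the compact-support reduction of the homotopy, for which local finiteness of $\{\conv U\}$ (rather than of $\fU$) is crucial: it forces the displacement $\psi(\varphi(x)) - x$ to be controlled by only finitely many convex hulls on each compact region.
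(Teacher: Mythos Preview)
Your plan has a genuine gap in the inductive construction of $\psi$. You require $\psi(\sigma)\subseteq C_\sigma:=\conv\bigl(\bigcap_{U\in\sigma}U\bigr)$, but these carriers are \emph{decreasing} in $\sigma$: for a proper face $\tau\subsetneq\sigma$ one has $C_\tau\supseteq C_\sigma$, as you yourself note. Hence after the inductive step $\psi(\partial\sigma)$ lies only in $\bigcup_{\tau\subsetneq\sigma}C_\tau$, which in general strictly contains $C_\sigma$, and there is no way to ``funnel'' it into $C_\sigma$ without altering $\psi$ on the faces---an alteration that wrecks the induction, since each face is shared by many higher simplices simultaneously. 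The bound $\invdim C_\sigma\le n-k-1$ says that the stabiliser of $C_\sigma$ is small, but it gives no mechanism for retracting a map with values in the larger $C_\tau$'s into the smaller $C_\sigma$. A carrier-style extension needs carriers that \emph{increase} along the nerve, and the sets $\conv U_J$ go the wrong way.

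There is a second gap in the degree argument. Even granting such a $\psi$, the straight-line homotopy $H(x,t)=(1-t)x+t\,\psi(\varphi(x))$ is a homotopy in the contractible space $\R^n$; to extract surjectivity you need it to be proper, equivalently you need a uniform bound on $\|\psi(\varphi(x))-x\|$. That displacement is controlled only by $\diam C_\sigma$ for the carrier simplex of $x$, and this may be infinite: condition~(2) permits $\invdim\conv U=n-1$, so individual hulls $\conv U$ can be unbounded. Local finiteness of $\{\conv U\}$ bounds how many hulls meet a given compact set, not their diameters, so your ``truncation into a compactly supported modification of the identity'' does not go through. The paper sidesteps both obstacles by replacing the nerve map with a \v{C}ech--de~Rham double complex built from \emph{boundedly supported} forms: condition~(2) enters through the computation $H^q_{bs}(\conv U_J)=0$ for $q>\invdim\conv U_J$ (Theorem~\ref{thm:calc_coho}), and local finiteness is precisely what makes the \v{C}ech differential $\delta$ well defined and exact on boundedly supported cochains.
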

\footnotetext{An open cover is {\em locally finite} if every compact set intersects only finitely many cover elements.}

\section{Proof of Theorem \ref{thm:main}}
We will provide some notations, most are taken from \cite{Mc}.
Define the {\em Minkowski successive minima} of a lattice $\Lambda$ by \begin{equation*}
\lambda_i(\Lambda):=\inf\{r>0:\dim\spann\{v\in \Lambda:|v|<r\}\ge i\}. 
\end{equation*}
Let $\WR_n\subseteq X_n$ be the set of all lattices for which all Minkowski successive minima are equal. Although the standard definition of $\WR_n$ uses the euclidean norm $|\,\cdot\,|$, here we consider the analogous definition with an arbitrary fixed norm.

The Harder-Narasimhan filtration was defined in \cite{HN} and described nicely by Grayson \cite{G}. Its construction for standard lattices in $\R^n$ goes as follows.
For every discrete subgroup $\Gamma<\R^n$, denote by $\cov \Gamma$ the Euclidean volume of the group $\spann\Gamma/\Gamma$. 
By convention $\cov\{0\}:=1$. We associate to $\Gamma$ the point $p_\Gamma := (\rk(\Gamma), \log \cov \Gamma)\in \R^2$. 

For every lattice $\Lambda\in X_n$ define $S_\Lambda := \{p_\Gamma:\Gamma\le \Lambda\}$. 
Denote the extreme points of $\conv(S_\Lambda)$ by $p_0,..., p_k$, and, for each $0\le i\le k$, let $\Gamma_i \le\Gamma$ satisfy $p_i = p_{\Gamma_i}$.
A result of the Harder-Narasimhan filtration states that up to reordering, $\{0\}= \Gamma_0<...< \Lambda_k=\Lambda$, are of strictly increasing ranks. Furthermore, if $p(\Gamma)$ is an extreme point, then $\Gamma$ is the unique subgroup that is associated to this point. In addition, for every $0\le i\le k$ one has $\cov\Gamma_i\le 1$. The filtration $\{0\}= \Gamma_0<...< \Lambda_k=\Lambda$ is called the Harder-Narasimhan Filtration.
\begin{definition}\label{Stable}
	The set of {\em stable lattices} $\ST_n$ is the set of all lattices $\Lambda\in X_n$ such that the Harder-Narasimhan filtration of $\Lambda$ contains only $\{0\}$ and $\Lambda$, that is, for every $\Gamma\le \Lambda$ one has $\cov\Gamma \ge 1$. 
\end{definition}

{\bf Wedge product geometry.}
Denote $\be_1,...,\be_n$ the standard basis of $\R^n$.
A basis for $\bigwedge^k\R^n$ is given by $\be_J:=\be_{j_1}\wedge...\wedge \be_{j_k}$ for  $J=\{0<j_1<...<j_k\le n\}$. Its dual basis is denoted $\{\varphi_J:\#J=k\}\subseteq \left(\bigwedge^k\R^n\right)^*$. 

A vector in the $k$'th wedge product is called a {\em $k$-vector}.
For simplicity, for every $k$-vector $v\in \bigwedge^k\R^n$ we use the norm
\[\|\omega \|_{k-\text{vec}}:=\max_J \left|\varphi_J(\omega)\right|\] and \[\supp \omega:=\{J:\varphi_J(\omega)\neq 0\}.\]
Do not confuse the arbitrary norm $|\,\cdot\,|$ of $\R^n$ with $\|\,\cdot\,\|_{1-\text{vec}}$ on $\bigwedge^1\R^n \cong \R^n$. 

{\bf Measured subspaces.} A $k$-dimensional {\em measured subspace} is a real vector subspace $M\subseteq \R^n$ equipped with a nonzero $k$-vector $\det(M)\in \bigwedge^k M$, chosen up to sign.
We denote the set of $k$ dimensional measured subspaces by $\fG_{n,k}$. 
For every $k$ dimensional measured subspace $M$ we define $\|M\|_{MS}:=\|\det M\|_{k-\text{vec}}$. 

Any discrete subgroup $\Gamma < \R^n$ gives rise to a measured space $M(\Gamma)\in \fG_{n, \rk \Gamma}$;
the space is $\spann\Gamma$, and $\det M(\Gamma)=v_1\wedge...\wedge v_k$ for a basis $v_1,...,v_k$ of $\Gamma$.

For a vector space $V$ we define its support to be $\supp (v_1 \wedge ... \wedge v_k)$ for some (any) basis $v_i$ of $V$, this is well-defined because changing the basis only multiplies $v_1 \wedge ... \wedge v_k$ by a nonzero scalar.

An alternative definition of $\supp v$ is \[\supp v:=\{J\subseteq \{1,...,n\} \text{ of size }k:\pi_J|_v \text{ is injective}\},\]
where $\pi_J:\R^n\to \R^n$ is the projection setting all coordinates not in $J$ to 0.

{\bf Flags.}

The main object that we use in the proof is the concept of a measured flag.
A {\em measured flag} is a sequence of measured spaces $\{0=v_0<v_1<...<v_l=\R^d\}$. We impose no restrictions on the volume elements. Denote the set of measured flag by $\fF_n$ and for every measured flag $F=\{0=v_0<v_1<...<v_l=\R^d\}$ define $\|F\|_F:=\max_{l>0}\|v_l\|_{MS}$.
%
We will investigate functions $F:A\to \fF_n$ with the following properties.
\begin{definition}\label{def:good_flags}
	A function $F:A\to \fF_n$ is {\em bounded} if \[\sup_{a\in A} \|F(a)\|_{F} <\infty.\]
	
	It is {\em lower locally invariant} if for every $a\in A$ there is a neighborhood $U\subseteq A$ of the identity matrix such that $a'F(a)\subseteq F(a'a)$ for every $a'\in U$. 
	
	$F$ is {\em discrete }if the set 
	\[\{a^{-1}\det v :a\in A, v\in F(a)\}\]
	is discrete in $\bigsqcup_{k=0}^n\bigwedge^k\R^n$. 
\end{definition}

\begin{thm}\label{thm:main2}
	For any discrete bounded lower locally invariant $F$ there is a point $a\in A$ such that $F(a)$ is the trivial flag $\{0<\R^n\}$. 
\end{thm}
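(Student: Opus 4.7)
The plan is to apply Theorem~\ref{thm:topo} to $A$, parametrized by the logarithmic coordinates $\R^n_0:=\{t\in\R^n:\sum_j t_j=0\}\cong\R^{n-1}$. Suppose for contradiction that $F(a)$ is never the trivial flag. For each $\omega$ in the (discrete) set
\[S^*:=\bigl\{a^{-1}\det v : a\in A,\ v\in F(a),\ v\neq 0,\ v\neq\R^n\bigr\},\]
writing $W_\omega:=\spann\omega$, set
\[U_\omega:=\{a\in A : (aW_\omega,\,a\omega)\in F(a)\}.\]
Each $U_\omega$ is open by lower local invariance, and by hypothesis the family $\fU:=\{U_\omega\}_\omega$ is an open cover of $\R^{n-1}$. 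The goal is a contradiction via Theorem~\ref{thm:topo}: its conclusion in dimension $n-1$ supplies $n$ distinct $U_{\omega_i}$ with a common point $a$, placing $n$ pairwise distinct proper nontrivial measured subspaces into the single chain $F(a)$---forcing $n$ distinct dimensions in $\{1,\dots,n-1\}$, impossible.

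\emph{Local finiteness.} Writing $\mathbf{1}_J:=\sum_{j\in J}\be_j$ and $C:=\sup_a\|F(a)\|_F<\infty$, boundedness gives
\[U_\omega\subseteq P_\omega:=\bigl\{t\in\R^n_0:\langle\mathbf{1}_J,t\rangle\le \log(C/|\varphi_J(\omega)|),\ J\in\supp\omega\bigr\},\]
a convex polytope, so $\conv U_\omega\subseteq P_\omega$. If $\conv U_\omega$ meets a compact $K\subseteq A$, some $a\in K$ has $\|a\omega\|\le C$, hence $\|\omega\|$ is bounded in terms of $K$, and discreteness of $S^*$ leaves only finitely many $\omega$.

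\emph{Invariance-dimension bound.} For $k$ distinct $\omega_i\in S^*$ with nonempty intersection, the measured subspaces $(aW_i,a\omega_i)$ at the intersection point all lie in the chain $F(a)$, so after reindexing the spans satisfy $W_1\subsetneq\cdots\subsetneq W_k$ with $\dim W_i=k_i$ strictly increasing in $\{1,\dots,n-1\}$. The lineality of $P_\omega$ in $\R^n_0$ is the annihilator of $L_\omega:=\spann\{\mathbf{1}_J\bmod\mathbf{1}:J\in\supp\omega\}$, so it suffices to prove $\dim(L_{\omega_1}+\cdots+L_{\omega_k})\ge k$. I construct a strict chain $J_1\subsetneq\cdots\subsetneq J_k$ of proper subsets of $[n]$ with $J_i\in\supp\omega_i$; the indicators $\mathbf{1}_{J_i}$ of such a chain are linearly independent modulo $\mathbf{1}$ (test any relation on an index in $J_{i+1}\setminus J_i$ to kill the coefficient $c_i$, and on an index outside $J_k$ to kill the multiple of $\mathbf{1}$). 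The chain is built downward from any $J_k\in\supp\omega_k$: given $J_{i+1}$, the condition $J_{i+1}\in\supp\omega_{i+1}$ says $\pi_{J_{i+1}}|_{W_{i+1}}$ is injective, so $\pi_{J_{i+1}}(W_i)$ is a $k_i$-dimensional subspace of $\spann\{\be_j:j\in J_{i+1}\}\cong\R^{k_{i+1}}$, and a nonvanishing-minor argument selects $J_i\subsetneq J_{i+1}$ of size $k_i$ on which this image projects isomorphically; this $J_i$ lies in $\supp\omega_i$.

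Granting both hypotheses, Theorem~\ref{thm:topo} applied to $\R^{n-1}$ yields the promised $n$ distinct sets and common point $a$, and the dimension pigeonhole closes the contradiction. The main obstacle is the invariance-dimension bound, and inside it the combinatorial chain construction; once that chain is secured, the rest is a direct verification.
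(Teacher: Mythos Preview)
Your argument is correct and follows essentially the same route as the paper: cover $A\cong\R^{n-1}$ by the sets $U_v$ (your $U_\omega$), bound each by the convex region $\{a:\|av\|\le c_F\}$ to get local finiteness from discreteness, and for the invariance-dimension bound show that $k$ intersecting sets yield a flag and then exhibit $k$ independent linear functionals bounded above on the intersection. The only cosmetic difference is in this last step: the paper extracts the index sets from a single permutation $\sigma$ built upward (Lemma~\ref{lem:perm}), while you build the chain $J_1\subsetneq\cdots\subsetneq J_k$ directly by descending through the flag and selecting nonvanishing minors; the two constructions are equivalent and your linear-independence check for the indicators $\mathbf{1}_{J_i}$ modulo $\mathbf{1}$ plays the same role as the paper's observation that the $\psi_{\dim v_k}$ are independent.
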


\begin{proof}[Proof of Theorem \ref{thm:main} using Theorem \ref{thm:main2}.]
	Fix a lattice $\Lambda_0$ and let $B(r)\subseteq \R^n$ be the ball of radius $r$ with respect to the norm $|\,\cdot\,|$.
	For ever lattice $\Lambda \in X_n$ define the Minkowski measured flag by
	\[F_\text{Mink}(\Lambda):=\{\spann B(r)\cap \Lambda:r>0\}\]
	and for every $v\in F(\Lambda)$ the volume element is given by $M(v\cap \Lambda)$.
	By Minkowsi's second theorem one can see that there is a constant $C_n$ depending only on $n$ such that $\|F(\Lambda)\|_F\le C_n$. 
	
	We will prove that for some $a\in A$ we have $a\Lambda_0\in \WR_n$. 
	We apply Theorem~\ref{thm:main2} to the flag \[F(a):=F_\text{Mink}(a\Lambda_0).\]
	By the previous discussion this flag is bounded.
	It is discrete since $\bigsqcup \wedge^k\Lambda_0$ is discrete. It is lower locally invariant by the definition of $F$. The result follows since $\WR_n=\{\Lambda\in X_n:F(\Lambda)=\{0<\R^n\}\}$.
	A similar proof, using the Harder-Narasimhan filtration instead of the Minkowski measured flag, shows that $\ST_n$ intersect every $A$ orbit.
\end{proof}

The rest of this section is dedicated to the proof of Theorem~\ref{thm:main2} Using Theorem~\ref{thm:topo}.

Denote $[a]:=\{1,...,a\}$. 
We will prove the following simple observation.
\begin{lem}\label{lem:perm}
	For every flag $F=\{0=v_0<v_1<...<v_{l}=\R^n\}$ there exist a permutation $\sigma$ of $[n]$ such that $\sigma([\dim v_i])\in \supp v_i$ for every $0\le i\le l$. 
\end{lem}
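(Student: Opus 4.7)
The plan is to reduce the statement to a purely linear-algebraic extension step. I would build by induction on $i$ a chain
\begin{equation*}
\emptyset = J_0 \subsetneq J_1 \subsetneq \cdots \subsetneq J_l = [n], \qquad |J_i| = \dim v_i, \qquad J_i \in \supp v_i,
\end{equation*}
and then let $\sigma$ be any enumeration of $[n]$ that lists the elements of $J_1 \setminus J_0$ first, then those of $J_2 \setminus J_1$, and so on. By construction $\sigma([\dim v_i]) = J_i \in \supp v_i$ for every $i$, which is exactly the conclusion of the lemma. Note that the volume elements attached to the $v_i$ play no role at all; only the underlying flag of subspaces matters, since $\supp v_i$ depends only on $v_i$ as a vector subspace.

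The only real content is the following extension step, which I would prove directly: \emph{if $W \subseteq V \subseteq \R^n$ and $J_W \in \supp W$, then some $J_V \in \supp V$ satisfies $J_W \subseteq J_V$.} Since $\pi_{J_W}|_W$ is injective and its target $\R^{|J_W|}$ has the same dimension as $W$, its image fills the target, so $\pi_{J_W}|_V$ has rank exactly $|J_W|$. The kernel $K := \ker(\pi_{J_W}|_V) \subseteq V$ therefore has dimension $\dim V - |J_W|$, and its $J_W$-coordinates vanish, so $K \subseteq \spann\{\be_j : j \notin J_W\}$. Any $d$-dimensional subspace of a coordinate space projects injectively onto at least one $d$-element subset of coordinates (pick any $d$ linearly independent columns in a basis matrix), so one can choose $J' \subseteq [n] \setminus J_W$ with $|J'| = \dim K$ such that $\pi_{J'}|_K$ is injective. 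Setting $J_V := J_W \cup J'$ gives $|J_V| = \dim V$, and if $v \in V$ satisfies $\pi_{J_V}(v) = 0$ then $\pi_{J_W}(v) = 0$ places $v$ in $K$, whereupon $\pi_{J'}(v) = 0$ forces $v = 0$; hence $J_V \in \supp V$.

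Starting from $J_0 = \emptyset$ (which lies in $\supp v_0$ since $v_0 = \{0\}$) and applying this extension step $l$ times produces the required chain; the chain must terminate at $J_l = [n]$ because $|J_l| = \dim v_l = n$. There is no substantive obstacle here: the whole argument is a one-line kernel/image computation, applied $l$ times. Conceptually, the extension step is exactly the matroid-theoretic fact that an independent set extends to a basis, applied to the coordinate matroid whose bases on $v_i$ are precisely $\supp v_i$, together with the elementary observation that $J_{i-1} \in \supp v_{i-1}$ is independent in the larger matroid of $v_i$.
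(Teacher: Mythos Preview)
Your proof is correct and follows essentially the same approach as the paper's. The only organizational difference is that the paper first refines the flag to a complete one and then extends $J$ by a single coordinate at each step (so the kernel is one-dimensional and one picks any nonzero coordinate of a vector in it), whereas you keep the original flag and handle the jump from $\dim v_{i-1}$ to $\dim v_i$ in one go by choosing a coordinate set on which the whole kernel projects injectively; the underlying linear algebra is identical.
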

\begin{proof}
	Without loss of generality add some subspaces to the flag and assume that $l=n$, that is, all dimensions appear in $F$ and $\dim v_i=i$ for every $0\le i\le n$. Recall that for every $J\subseteq [n]$ we denoted by $\pi_J$ the projection $\R^n\to \R^{n}$ setting all coordinates not in $J$ to 0, which has rank $\#J$. 
	
	We construct the permutation $\sigma$ inductively. At the $k$'th stage we will construct $\sigma(k)$ such that $\pi_{\sigma([k])}|_{v_k}$ is a bijection 
	(for $k=0$ this assumption is vacuous).
	Suppose by induction for some $J=\sigma([k])$ we have that $\pi_{J}|_{v_k}$ is a bijection. We will show that there exist $j'\nin J$ such that $\pi_{J\cup\{j'\}}|_{v_{k+1}}$ is a bijection and define $\sigma(k+1)=j'$. 
	Since $\dim v_{k+1}>k$ there is a nontrivial vector $v\in \ker \pi_J|_{v_{k+1}}$. Since $v\in \ker \pi_J$ all its $J$ coordinates vanish. Since it is nontrivial, there is $j'$ such that the $j'$ coordinate of $v$ is nontrivial. Denote $J':=J\cup \{j'\}$. 
	Since the $j'$ coordinate of $v$ is nontrivial, $\pi_{J'}(v)\neq 0$. But $\pi_J(v)=\pi_J\circ\pi_{J'}(v)=0$ and hence $k=\dim \pi_J(v_{k+1})<\dim \pi_{J'}(v_{k+1})$. Therefore $\pi_{J'}|_{v_{k+1}}$ is a bijection, as desired. 
\end{proof}
{\bf Convex sets}
\begin{lem}
	If $\emptyset\neq U_1\subseteq U_2\subseteq \R^n$ are open convex sets then $\invdim U_1\le \invdim U_2$. 
\end{lem}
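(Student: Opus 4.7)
The plan is to prove the stronger statement $\stab_{\R^n}(U_1) \subseteq \stab_{\R^n}(U_2)$, which (once one observes that both stabilizers are linear subspaces of $\R^n$) immediately yields the desired inequality of dimensions.

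The key intermediate fact I would establish is a \emph{lineality characterization} of the stabilizer of a nonempty open convex set $U \subseteq \R^n$: a vector $v \in \R^n$ lies in $\stab_{\R^n}(U)$ if and only if $x + \R v \subseteq U$ for some (equivalently, every) $x \in U$. The ``only if'' direction is elementary: if $U + v = U$ and $x \in U$, then $x + nv \in U$ for every $n \in \Z$, and convexity together with $n \to \pm\infty$ fills in all real $t$. The ``if'' direction is where most of the work lies. Given $x + \R v \subseteq U$ and $y \in U$, I would argue by contradiction using the Hahn--Banach separation theorem: if $y + bv \notin U$ for some $b > 0$, separate the point $y + bv$ from the open convex set $U$ by an affine functional $\phi$ with $\phi < c$ on $U$ and $\phi(y+bv) \ge c$. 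Comparing $\phi(y+bv) = \phi(y) + b\,\phi(v)$ with $\phi(y) < c$ forces $\phi(v) > 0$, but then $\phi(x + tv) = \phi(x) + t\,\phi(v) \to +\infty$ as $t \to +\infty$, contradicting $\phi(x + tv) < c$. The symmetric case $b < 0$ is handled identically, so $y + \R v \subseteq U$; in particular $y + v \in U$ and $y - v \in U$, which says precisely that $v \in \stab_{\R^n}(U)$.

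With the characterization in hand, the lemma is immediate. Take $v \in \stab_{\R^n}(U_1)$ and any $x \in U_1 \subseteq U_2$. Applying the ``only if'' direction to $U_1$ gives $x + \R v \subseteq U_1 \subseteq U_2$, and then applying the ``if'' direction to $U_2$ returns $v \in \stab_{\R^n}(U_2)$.

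The main obstacle is the separation step. The openness of $U_2$ is essential in order to apply Hahn--Banach with a strict inequality, and the availability of the \emph{entire} line (not just a ray) inside $U_2$ is what yields the two-sided contradiction forcing the coefficient of $v$ to vanish; dropping either hypothesis would leave room for $y + v$ to escape to the boundary. Everything else is essentially bookkeeping.
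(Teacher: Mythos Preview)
Your proof is correct and follows the same route as the paper: show $\stab_{\R^n}(U_1)\subseteq\stab_{\R^n}(U_2)$ via the lineality characterization $\stab_{\R^n}(U)=\{v:\text{some (any) }x\in U\text{ has }x+\R v\subseteq U\}$. The paper is terser---it translates so that $0\in U_1$ and simply asserts $\stab_{\R^n}(U)=\{v:\R v\subseteq U\}$ without proof---whereas you supply a full justification; your Hahn--Banach separation for the ``if'' direction is valid but heavier than necessary, since a direct convexity argument (write $y+v$ as a convex combination of a point near $y$ and a far-out point $tv$ on the given line) already does the job.
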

\begin{proof}
	Assume without loss of generality that $0\in U_1$. Since for every open convex set $U$ that contains 0 we have  \[\stab_{\R^n}U=\{v\in \R^n:\R v\subseteq U\},\]
	the result follows.
\end{proof}

Define 
\begin{eqnarray*}
	\exp:\R^{n-1}_0:=\left \{(x_1,...,x_n):\sum_{i=1}^n x_i=0\right \}& \to& A\\ 
	(x_1,...,x_n)& \mapsto& \diag(\exp x_1,...,\exp x_n),
\end{eqnarray*}
and $\log:A\to \R^{n-1}_0$ be the inverse function. We will identify $A$ and $\R^{n-1}_0$ using this transformation and push all the notions of convexity that are defined on $\R^{n-1}_0$ to $A$.

Since the exponential function $x\mapsto e^x$ is convex, and since maximum preserves convexity, the function $a\mapsto\|aM\|_{MS}$ is a convex function for all $M\in \fG_{n,k}$ and so is $a \mapsto \|aF\|_F$ for all $F\in \fF_n$. 

\begin{proof}
	[Proof of Theorem \ref{thm:main2} using Theorem \ref{thm:topo}.]
	Assume to the contrary that $F:A\to \fF_n$ is discrete, lower locally invariant, nowhere trivial, and bounded by $c_F>0$. Construct the following cover of $\R^{n-1}_0$.
	For every $0<k<n$ and $k$-dimensional measured space $v$ define $U_v:=\{a\in A:av\in F(a)\}$. Let $\fU$ be the collection of sets $\{U_v\}$, where $v$ ranges over all $k$-dimensional measured spaces with $0<k<n$.
	Since $F$ is nowhere trivial, we deduce that $\fU$ is a cover of $A$. 
	
	To use Theorem \ref{thm:topo} we need to prove that its Conditions (1) and (2) holds. 
	To prove that Condition (1) holds, let $\fU'$ be the collection of sets 
	$U'_v:=\{a\in A:\|av\|_{MS}\le c_F\}$. 
	Since $F$ is bounded by $c_F$, we have $U'_v\supseteq U_v$ for every measured space $v$. Consequently, $\fU'$ is a cover, and since $F$ is discrete, it is locally finite. Hence $\fU$ is locally finite as well.
	
	To prove Condition (2) we will classify intersection of elements in $\fU$. Let $U_{v_1}, U_{v_2},...,U_{v_l}$ be elements of $\fU$ that have a nontrivial intersection $V\neq\emptyset$. For all $a\in V$ we have $av_1,...,av_l\in F(a)$, and hence $v_1,...,v_l$ form a flag. Assume without loss of generality that $0<v_1 <v_2<...<v_l<\R^n$.
	By Lemma \ref{lem:perm} there exists a permutation $\sigma:[n]\to [n]$ such that $\sigma([\dim v_k])\in\supp\, v_k$. Assume without loss of generality that $\sigma$ is the identity permutation. Note that for all $1\le k\le l$, $\vec x\in \R^{n-1}_0$ one has 
	\[\varphi_{[\dim v_k]}(\exp (\vec x) v_k)=\exp (\psi_{\dim v_k}\vec x)\varphi_{[\dim v_k]}(v_k),\]
	where \begin{eqnarray*}
		\psi_m:\R^{n-1}_0&\to& \R,\\
		\vec x=(x_1,...,x_n)&\mapsto& x_1+...+x_m.
	\end{eqnarray*}
	Denote $c_k:=\left |\varphi_{[\dim v_k]}(v_k)\right |$. 
	For every $\vec x\in \log V$ one has
	\[c_F>\|F(\exp\vec x)\|_F\ge \max_{k=1}^l \|\exp(\vec x) v_k\|_{MS}\ge \max_{k=1}^l \exp (\psi_{\dim v_k}\vec x) c_k,\]
	and hence the set $\log V$ is contained in $P:=\bigcap_{k=1}^l \psi^{-1}_{k}(-\infty,\log c_F-\log c_k)$. Since the functionals $\psi_k$ are linearly independent, the set $P$ satisfies $\invdim P=n-1-l$, and hence $\invdim\conv(V)\le n-1-l$. 
	
	We proved that the conditions of Theorem \ref{thm:topo} holds, and therefore the conclusion is as well: there is a nontrivial intersection of $n$ sets of $\fU$. As shown above, this intersection corresponds to a nontrivial flag with $n$ nontrivial elements, which is a contradiction.
\end{proof} 
\section{Proof of Theorem \ref{thm:topo}}
\subsection{Sketch of proof}
The proof of Theorem \ref{thm:topo} is a modified version of the proof of Theorem 5.1 in \cite{Mc}.
The main steps of the two proofs are the following:

\begin{enumerate}
	\item We construct a complex of presheaves 
	\begin{align*}
	\mathcal F: \xymatrix{0\ar[r]^{ d} & \mathcal F^0\ar[r]^{ d}& \mathcal F^1\ar[r]^{ d} &...}
	\end{align*}
	on $\R^n$
	such that the $n$'th cohomology of $\R^n$ w.r.t. $\mathcal F$, denoted $H^n_\mathcal F(\R^n)$, is nontrivial. We select a family $\fE$ of open subsets of $\R^n$ and calculate their $\mathcal F$-cohomologies. 
	
	\item Using conditions (1) and (2) we construct for every set of the form $V:=U_1\cap U_2\cap ...\cap U_k$ a nice set $V\subseteq E(V)\in \fE$ for which the $(n-k)$ $\mathcal F$-cohomology is trivial, and such that whenever $V_1\subseteq V_2$ we have $E(V_1)\subseteq E(V_2)$. 
	
	\item We complete the proof using some cohomological algebra. We construct a \v{C}ech-deRham double complex $\calA$ using $\mathcal F$ and $\fU$. We prove exactness in the \v{C}ech direction, and conclude that the $\mathcal F$-cohomology of $\R^n$ is equal to the total cohomology of $\calA$.
	We cover $\calA$ by a double complex $\calB$, built with $E$ instead of the intersections themselves. We show that the restriction map $\calB\to \calA$ is onto on the cohomologies.
	Then we show that $\calB$ is exact in the $\mathcal F$ direction on the $n$'th level, and hence any element in the $n$ cohomology class of $\calB$ can be represented in the class that represents $E$ of intersection of $n+1$ elements. Since there is a nontrivial $n$ dimensional $\mathcal F$ cohomology class in $\R^n$ there is an nonempty intersection of $n+1$ elements of $U$.
	
	Since $\mathcal F$ is not a sheaf, some work is needed to achieve exactness. 
\end{enumerate}

The differences between the proof of Theorem \ref{thm:topo} and of McMullen are the following: \begin{itemize}
	\item McMullen uses the complex of bounded forms while we use the complex of boundedly supported forms. 
	\item For the family $\fE$ McMullen uses cylinders, while we use convex sets. 
	\item The cohomology calculation is different: McMullen calculates it directly while we use the Mayer-Vietoris sequence. 
	\item The \v{C}ech-deRham double complex is different: McMullen used direct sum of normed spaces while we use standard direct sum.
\end{itemize}
%

\subsection{Boundedly supported forms}
Denote by $B(r)\subseteq \R^n$ the open ball of radius $r$ around $0$.
For every open set $U\subseteq \R^n$ denote by $\Omega^k(U)$ the set of $k$-forms on $U$ and by $\Omega_{bs}^k(U)$ the set of $k$-forms on $U$ that vanish outside $B(r)$ for some $r>0$.
Recall the differential transformation $ d = d_k:\Omega^k(U)\to \Omega^{k+1}(U)$. 
Denote by $H^*(U)$ the $\Omega^*(U)$-cohomology group and by $H^*_{bs}(U)$ the $\Omega^*_{bs}(U)$-cohomology group. 

\begin{definition}
	For every convex open set $U\subseteq \R^n$ we define $\deg U$ as follows. If the projection of $U$ to $\R^n/\stab_{\R^n}(U)$ is bounded then $\deg U:=\invdim(U)$; otherwise, $\deg U:=-\infty$. 
\end{definition}
For example, the convex region $U_0\subseteq\R^2$ bounded by a parabola satisfies $\invdim U_0=0$ and $\deg U_0=-\infty$, and the open cylindrical neighborhood of a line $U_1\subseteq\R^3$ satisfies $\invdim U_1=\deg U_1=1$. 

\begin{lem}
	If $U\subseteq \R^n$ is an unbounded open convex set and $\invdim U=0$, then there is a functional $\varphi$ such that \begin{align}\label{eq:functional}
	\{x\in U:\varphi(x)<r\} \text{ is bounded for every } r>0.
	\end{align}
\end{lem}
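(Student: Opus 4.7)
The plan is to reduce the statement to the existence of a strictly positive linear functional on the recession cone of $U$. Let $C := \mathrm{rec}(\overline U) = \{v \in \R^n : v + \overline U \subseteq \overline U\}$ be the recession cone of the closure of $U$; this is a closed convex cone containing $0$. First I would verify the identity $\stab_{\R^n}(U) = C \cap (-C)$: for open convex $U$, a vector $v$ satisfies both $v + \overline U \subseteq \overline U$ and $-v + \overline U \subseteq \overline U$ iff $v + U = U$, which is exactly to say that $v$ lies in the lineality space of $\overline U$. Since $\invdim U = 0$, this lineality space is trivial, so $C$ is a pointed closed convex cone; and since $U$ is unbounded, $C \neq \{0\}$.

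Next I invoke standard convex duality: any pointed closed convex cone $C \subseteq \R^n$ admits a linear functional $\varphi \in (\R^n)^*$ with $\varphi(v) > 0$ for every $v \in C \setminus \{0\}$. (One way to see this: the dual cone $C^\vee := \{\varphi : \varphi|_C \geq 0\}$ has nonempty interior precisely when $C$ is pointed, and any $\varphi \in \mathrm{int}(C^\vee)$ has this property.)

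I then claim this $\varphi$ satisfies \eqref{eq:functional}. Suppose for contradiction that $\{x \in U : \varphi(x) < r\}$ is unbounded for some $r > 0$. Choose $x_k \in U$ with $\varphi(x_k) < r$ and $|x_k| \to \infty$, and pass to a subsequence along which $v_k := x_k/|x_k|$ converges to some unit vector $v$. Fixing any $y \in U$, the identity $y + t(x_k - y)/|x_k| = (1 - t/|x_k|)y + (t/|x_k|)x_k$ exhibits the left-hand side as a convex combination of $y$ and $x_k$ once $|x_k| \geq t$; hence it lies in $U$, and letting $k \to \infty$ gives $y + tv \in \overline U$ for every $t \geq 0$. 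This shows $v \in C$, and since $|v| = 1$ we get $\varphi(v) > 0$. On the other hand, $\varphi(v_k) = \varphi(x_k)/|x_k| < r/|x_k| \to 0$, so $\varphi(v) \leq 0$, a contradiction.

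The only delicate point I anticipate is the bookkeeping in the first step, namely the clean verification that $\stab_{\R^n}(U)$ coincides with the lineality space of $\mathrm{rec}(\overline U)$; once this link between the algebraic hypothesis $\invdim U = 0$ and the geometric statement \emph{$C$ is pointed} is in place, the remainder is a routine application of finite-dimensional convex duality and a recession-direction argument.
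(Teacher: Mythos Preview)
Your proof is correct and follows essentially the same approach as the paper: identify the cone of recession directions of $U$, use $\invdim U = 0$ to show it is pointed, produce a linear functional that is strictly positive on it, and argue by contradiction that this functional has bounded sublevel sets in $U$. The paper carries this out by hand---defining $A(U) = \bigcap_{\lambda>0} \lambda U$ and $C(U) = A(U) \cap S^{n-1}$, showing $0 \notin \conv C(U)$ directly, and separating---rather than invoking the named machinery of recession cones, lineality spaces, and dual cones, but the underlying argument is the same.
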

\begin{proof}
	Assume without loss of generality that $0\in U$. 
	Denote by \[A=A(U):=\{x\in \R^n:\forall \lambda>0, \lambda x\in U\}\] the union of all rays from 0 that are contained in $U$. 
	Note that $A=\bigcap_{\lambda>0}\lambda U$ is the intersection of convex sets and hence convex.
	Since\footnote{$\bar U$ is the closure of $U$.} $\frac 12 \bar{U} \subseteq U$ we have $A=\bigcap_{\lambda>0}\lambda \bar U$, and hence $A$ is closed. Let $S^{n-1}$ be the $n-1$ unit sphere and denote $C=C(U):=S^{n-1}\cap A$. 
	Since \[C=\bigcap _{\lambda>0}(S^{n-1}\cap \lambda \bar{U})\] is the intersection of nonempty compact sets that decrease as $\lambda$ goes to $0$, it is nonempty.
	We argue that $0\nin \conv(C)$. Indeed if $0\in \conv(C)$ than there exist $l>0$, $v_1,...,v_l\in C$ and positive $\alpha_1,...,\alpha_l$ such that $\sum_{i=1}^l \alpha_i v_i=0$. 
	Since $U$ is convex it follows that $V:=\spann\{v_1,...,v_l\}\subseteq U$, and hence $U$ is invariant to translations by vectors in $V$, which contradicts the assumption that $\invdim U=0$. Hence, $0\nin \conv\, C$, and there exists a functional $\varphi\in (\R^n)^*$ such that $\varphi|_{C}>1$. 
	We will show that $\varphi$ satisfies Equation (\ref{eq:functional}). Otherwise, there exists $r>0$ such that the set $U':=\{x\in U:\varphi(x)<r\}$ is unbounded. In particular \begin{align}\label{eq:cont}
	\emptyset\ne C(U')\subseteq C(U)=C.
	\end{align}
	On the other hand \[C(U')\subseteq A(U')= \bigcap_{\lambda>0}\lambda U'\subseteq \bigcap_{\lambda>0}\{x\in U:\varphi(x)<r\}=\{x\in U:\varphi(x)\le 0\},\]
	which, together with Equation (\ref{eq:cont}), contradicts $\varphi|_{C}>1$.
	Therefore $U'$ is bounded, as desired.
	
\end{proof}


\begin{thm}\label{thm:calc_coho}
	For every convex open set $U\subseteq \R^n$ and every $k\ge 0$ we have 
	\[H_{bs}^k(U)\cong \left\{\begin{array}{cc}
	\R & k=\deg U, \\ 
	0 & \text{otherwise}.
	\end{array} \right.\]
\end{thm}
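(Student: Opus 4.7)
The plan is to induct on $\invdim U$: the inductive step uses translation invariance to factor $U = U' \times \R$ and a K\"unneth-type integration along the fiber, while the base case $\invdim U = 0$ splits according to whether $U$ is bounded (handled by the classical Poincar\'e lemma) or unbounded (handled by a chain homotopy built from the flow along a recession direction together with the functional $\varphi$ produced by the preceding lemma).

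For the inductive step, assume $\invdim U \ge 1$ and pick a nonzero $v \in \stab_{\R^n}(U)$. Choosing coordinates so $v = e_n$ gives $U = U' \times \R$ with $U' \subseteq \R^{n-1}$ convex open, $\stab U = \stab U' \times \R e_n$, and matching quotient projections; in particular $\invdim U' = \invdim U - 1$ and $\deg U' = \deg U - 1$ in the finite case (both $-\infty$ otherwise). Integration along the $\R$-fiber,
\[
\pi_*(\alpha + \beta \wedge dt) := \int_\R \beta(\,\cdot\,,t)\, dt,
\]
is a chain map $\Omega^*_{bs}(U) \to \Omega^{*-1}_{bs}(U')$; the map $\eta \mapsto \eta \wedge \rho(t)\, dt$ for a bump $\rho$ with $\int \rho = 1$ is a chain-homotopy inverse via the standard primitive-along-the-fiber homotopy. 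This yields the K\"unneth isomorphism $H^k_{bs}(U) \cong H^{k-1}_{bs}(U')$, and the inductive hypothesis gives the statement.

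For the base case $\invdim U = 0$: if $U$ is bounded, every smooth form on $U$ is automatically boundedly supported, so $H^*_{bs}(U) = H^*(U) = \R$ in degree $0 = \deg U$ and zero otherwise. If $U$ is unbounded, the preceding lemma produces a functional $\varphi$ with $\{x \in U : \varphi(x) < r\}$ bounded for every $r > 0$, together with a unit vector $v$ in the recession cone $A(U) \cap S^{n-1}$ satisfying $\varphi(v) > 1$. The forward flow $\Phi_t(x) = x + tv$ then preserves $U$ for every $t \ge 0$, and I set
\[
K\omega := \int_0^\infty \Phi_t^*(\iota_v \omega)\, dt.
\]
At fixed $x$ the integrand vanishes once $x + tv$ leaves $\supp \omega$, so the integral is effectively finite pointwise; any $x$ with $K\omega(x) \ne 0$ forces $\varphi(x) \le C$ for a constant $C$ depending on $\omega$, and the lemma places such $x$ in a bounded subset of $U$. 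Hence $K\omega \in \Omega^{k-1}_{bs}(U)$. Cartan's formula integrated from $0$ to $\infty$, using $\Phi_0^*\omega = \omega$ and the pointwise vanishing of $\Phi_t^*\omega$ for large $t$, gives $dK + Kd = -\mathrm{id}$, so every closed form is exact and $H^*_{bs}(U) = 0$, in agreement with $\deg U = -\infty$.

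The main obstacle is verifying that $K\omega$ is genuinely boundedly supported on $U$ in the unbounded base case: this is precisely where the bounded support of $\omega$, the forward-invariance $\Phi_t(U) \subseteq U$ coming from $v \in A(U)$, and the properness of $\varphi$ must be combined. A secondary technicality is that the K\"unneth step concerns bounded (not compact) supports, so the primitive-along-the-fiber chain homotopy must be checked to preserve bounded supports in the transverse $U'$-direction.
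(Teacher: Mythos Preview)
Your proof is correct and shares the paper's inductive scaffold on $\invdim U$, but the mechanics at each stage are genuinely different. For the inductive step the paper splits $U=\R^k\times U'$ as $U_1\cup U_2$ with $U_i=U\cap\{\pm x_1\le 1\}$ and runs the Mayer--Vietoris long exact sequence for $\Omega^*_{bs}$, using that $\deg U_1=\deg U_2=-\infty$ (so their $H^*_{bs}$ vanish by the inductive hypothesis) and $\deg(U_1\cap U_2)=\deg U-1$ to obtain $H^l_{bs}(U)\cong H^{l-1}_{bs}(U_1\cap U_2)$; you instead build the degree shift directly via fiber integration and the Bott--Tu homotopy. For the unbounded base case the paper introduces the quotient complex $\Omega^*_o(U):=\Omega^*(U)/\Omega^*_{bs}(U)$, computes $H^*_o(U)$ by restricting to the convex tail $U\cap\varphi^{-1}(r,\infty)$, and reads off $H^*_{bs}(U)=0$ from the long exact sequence of $0\to\Omega^*_{bs}\to\Omega^*\to\Omega^*_o\to 0$; you instead produce an explicit contracting homotopy $K$ from the translation flow along a recession direction. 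Both arguments lean on the same lemma about $\varphi$, and both are short; the paper's route is purely homological and avoids checking smoothness and support of integral operators, while your route is more constructive and yields an actual primitive for each closed boundedly supported form.
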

\begin{proof}
	We will prove the claim by induction on $\invdim U$. 
	Assume first that $\invdim U=0$. If $U$ is bounded, we have $\Omega_{bs}^*(U)=\Omega^*(U)$, and the claim holds since $U$ is convex.
	Assume now that $U$ is unbounded and define $\Omega_{o}^k(U):=\Omega^k(U)/\Omega_{bs}^k(U)$. 
	Let $\varphi$ be functional satisfying Equation (\ref{eq:functional}).
	Choose $\omega\in \Omega^k(U)$ that represents a cocycle in $H_o^k(U)$, the $\Omega_o^k$-cohomology of $U$. 
	Then there is $r>0$ such that $d\omega$ vanishes on $V:=U\cap \varphi^{-1}(r,\infty)$. Since $V$ is a nonempty convex set, $H^k(V)=\left\{\begin{array}{cc}
	\R & k=0 \\ 
	0 & \text{otherwise}
	\end{array} \right.$, and hence there exists $\varpi\in \Omega^{k-1}(V)$ such that
	\[\omega= \left \{\begin{array}{cc}
	\text{const} & {\rm if} ~k=0 \\ 
	d \varpi & \rm otherwise
	\end{array} \right. \text{in} V.\]
	One can find a $(k-1)$-form $\varpi'\in \Omega^{k-1}(U)$ that agrees with $\varpi$ on $U\cap \varphi^{-1}(r+1,\infty)$, and thus $[\omega]\in H_o^k(U)$ is either trivial, or equivalent to the constant function if $k=0$. One can see that $ H_o^0(U)\cong \R$, since the constant functions in $\Omega_o^0(U)$ generate a nontrivial class.
	By definition the following is a short exact sequence of complexes:
	\begin{align*}
	\xymatrix{0\ar[r] & \Omega^*_{bs}(U)\ar[r]& \Omega^*(U)\ar[r]&\Omega_o^*(U)\ar[r]&0.}
	\end{align*}
	By the snake lemma the following is a long exact sequence of cohomologies:
	\begin{align*}
	0\, \to \, H_{bs}^0 (U)&\, \to \,  H^0(U) \, \to \,  H^0_o(U)\, \to \, 
	\\H_{bs}^1 (U)&\, \to \,  H^1(U) \, \to \,  H^1_o(U)\, \to \, ...
	\end{align*}
	Note that the arrow $H^k(U)\to H_o^k(U)$ is an isomorphism for every $k$. For $k=0$ the two groups are isomorphic to $\R$ and the arrow is a monomorphism. For $k>0$ both are trivial.
	Therefore, all the cohomologies in the sequence $H_{bs}^*(U)$ are $0$, and the proof for the case $\invdim U=0$ is complete.
	
	For the induction step, suppose $\invdim U=k>0$. Assume without loss of generality that $U=\R^k\times U'$ for $U'\subseteq\R^{n-k}$ with $\invdim U'=0$. 
	Write $U=U_1\cup U_2$ where $U_1:=U\cap \{x_1\ge -1\}$ and $U_2:=U\cap \{x_1\le 1\}$. Denote $V:=U_1\cap U_2$. 
	Note that $\invdim U_1=\invdim U_2=\invdim V=k-1$, $\deg U_1=\deg U_2=-\infty$, and $\deg V=\deg U-1$.
	Note that 
	\begin{align*}
	\xymatrixcolsep{3.4pc}\xymatrix{0\ar[r] & 
		\Omega^*_{bs}(U)\ar[r]^-{\alpha\mapsto (\alpha,\alpha)}& 
		\Omega^*(U_1)\times\Omega^*(U_2)\ar[r]^-{(\alpha,\beta)\mapsto \alpha-\beta}&
		\Omega_o^*(V)\ar[r]
		&0}
	\end{align*}
	is a short exact sequence of complexes and by the snake lemma there is a long exact sequence of cohomologies 
	\begin{align*}
	0\, \to \, H_{bs}^0 (U)&\, \to \,  H_{bs}^0(U_1)\oplus H_{bs}^0(U_2) \, \to \,  H_{bs}^0(V)\, \to \, \\
	H_{bs}^1 (U)&\, \to \,  H_{bs}^1(U_1)\oplus H_{bs}^1(U_2) \, \to \,  H_{bs}^1(V)\, \to \, ...
	\end{align*}
	Since $H_{bs}^*(U_1)$ and $H_{bs}^*(U_2)$ vanish we conclude that $H_{bs}^l(U)\cong H_{bs}^{l-1}(V)$ for every $l\ge 1$, as desired.
\end{proof}
\subsection{Complexes}
A {\em double complex }is a collection of Abelian groups $\{C^{p,q}\}_{p,q\ge 0}$ with two maps 
\[ d:\bigoplus_{p,q\ge 0}C^{p,q}\to \bigoplus_{p,q\ge 0} C^{p,q+1},\ \ \ 
\delta:\bigoplus_{p,q\ge 0}C^{p,q}\to \bigoplus_{p,q\ge 0}C^{p+1,q},\]
defined by the restrictions 
\[ d|_{C^{p,q}}= d_{p,q}:C^{p,q}\to C^{p,q+1},\ \ \  \delta|_{C^{p,q}}=\delta_{p,q}:C^{p,q}\to C^{p+1,q},\]
which are differentials and commute:
\[\delta^2= d^2=\delta d-\delta d=0.\]
We say that the {\em degree} of $C^{p,q}$ is $p+q$ and define the {\em total complex} of $C$ by $C^r:=\bigoplus_{p+q=r} C^{p,q}$ and 
\[D:\bigoplus_{r\ge 0}C^r\to\bigoplus_{r\ge 0}C^{r+1},\]
defined by the restrictions $D|_{C^r}=D_r:C^r\to C^{r+1}$, which in turn is defined by $D_r|_{C^{p,q}}=(-1)^q\delta_{p,q}+ d_{p,q}$.
One can verify that $D^2=0$. 
The total cohomologies of the double complex are $H_C^r:=\ker D_r/\Imm D_{r-1}$. 

\begin{lem}\label{lem:push_coho}
	If $\delta$ is exact at all groups of degree $r$, then any $\alpha \in H_C^r$ has a representative $a\in C^{0r}$. 
\end{lem}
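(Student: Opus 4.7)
The plan is to prove this by descending induction on the largest $p$-index appearing in a representative of $\alpha$. Write an arbitrary cocycle representative as $a = \sum_{p=0}^{r} a_p$ with $a_p \in C^{p, r-p}$, and let $k$ be the largest index with $a_k \neq 0$. I will show that if $k \geq 1$, one can replace $a$ by a cohomologous cocycle whose largest index is strictly smaller; iterating at most $r$ times produces a representative in $C^{0, r}$.

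To carry out the reduction step, extract from the identity $Da = 0$ its component in bidegree $(k+1, r-k)$. Since $a_{k+1} = 0$ by choice of $k$, the only contribution there comes from $D a_k = (-1)^{r-k} \delta a_k + d a_k$, and the bidegree consideration forces $\delta a_k = 0$. Because $C^{k, r-k}$ has total degree $r$, the exactness hypothesis supplies some $b \in C^{k-1, r-k}$ with $\delta b = a_k$. Set $a' := a - (-1)^{r-k} D b$. Using $D b = (-1)^{r-k} \delta b + d b$, one computes that the $(k, r-k)$-component of $a'$ vanishes and that $a'$ differs from $a$ elsewhere only in the $(k-1, r-k+1)$-slot (which receives an extra $-(-1)^{r-k} d b$). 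Hence $a'$ is a cocycle cohomologous to $a$ whose top index is strictly less than $k$.

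Iterating the procedure terminates with a representative of $\alpha$ lying entirely in $C^{0, r}$, as required. I do not anticipate any serious obstacle: the lemma is a transparent diagram chase once the sign convention in $D|_{C^{p,q}} = (-1)^q \delta_{p,q} + d_{p,q}$ is respected. The only point worth verifying carefully is that exactness of $\delta$ is invoked precisely at the groups $C^{k, r-k}$ with $1 \leq k \leq r$, which all have total degree $r$, so the stated hypothesis is exactly what is needed.
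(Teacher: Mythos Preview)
Your proof is correct and essentially identical to the paper's own argument: both proceed by descending induction on the maximal $p$-index of a cocycle representative, use $D\alpha=0$ to see that the top component is $\delta$-closed, invoke exactness of $\delta$ at degree $r$ to produce a preimage $b\in C^{k-1,r-k}$, and subtract the appropriate sign multiple of $Db$ to lower the top index. The only cosmetic difference is notation ($k$ versus $l$, and the paper phrases the correction as $\alpha+D\beta$ rather than $a-(-1)^{r-k}Db$).
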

\begin{proof}
	Let $\alpha\in C^r$ for which $D\alpha=0$. We will find $\beta\in C^{r-1}$ such that $\alpha+D\beta\in C^{0,r}$.
	Assume that \begin{align}\label{eq:push}
	\alpha=\sum_{p+q=r, p\le l} \alpha^{p,q}\in \bigoplus_{{p+q=r, p\le l}}C^{p,q},
	\end{align} where $\alpha^{p,q}\in C^{p,q}$, $\alpha^{l, r-l}\neq 0$, and $l>0$. We will show that there is $\beta\in C^{r-1}$ that satisfies $\alpha+D\beta\in \bigoplus_{{p+q=r, p\le l-1}}C^{p,q}$. Iterating this process yields the desired result.
	
	Since $D\alpha=0$ and $l$ is the maximal index in the right-most term in Equation (\ref{eq:push}), we deduce that $\delta\alpha^{l,r-l}=0$. Since $\delta$ is exact, there is $\beta\in C^{l-1,r-l}$ that satisfies $(-1)^{r-l}\delta\beta+\alpha^{l,r-l}=0$. Therefore 
	\begin{align*}
	\alpha+D\beta\in \bigoplus_{{p+q=r, p\le l-1}}C^{p,q}.
	\end{align*}
	
\end{proof}
\begin{remark}\label{rem:no_need}
	The Proof of Lemma \ref{lem:push_coho} is valid as soon as $D\alpha\in C^{0,r+1}$. 
\end{remark}
Define $C^{-1, q}=\ker\delta_{0,q}$. This construction has the following meaning: one can extend $C$ to a double complex with the new cells $C^{-1, q}$. 
Note that the image of the restriction $D|_{C^{-1, q}} = d|_{C^{-1, q}}$ lies in $C^{-1, q+1}$, and hence $C^{-1,q}$ is a complex. We denote its cohomologies by $H_{C,d}$.
Note that there is an inclusion map $\xymatrix{C^{-1,r}\ar[r]^i&C^r}$ which induces a map $\xymatrix{H_{C,d}^r\ar[r]^i& H_C^r}$. 
\begin{cor}\label{cor:iso_coho}
	If $\delta$ is exact then the map $\xymatrix{H_{C,d}^r\ar[r]^i& H_C^r}$ is an isomorphism. 
\end{cor}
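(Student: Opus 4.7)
I would prove surjectivity and injectivity of the induced map $i \colon H_{C,d}^r \to H_C^r$ separately, using Lemma~\ref{lem:push_coho} for the former and Remark~\ref{rem:no_need} for the latter. The two arguments are parallel in spirit: reduce representatives to the leftmost column $C^{0,\bullet}$, and then observe that $D$ decomposes into $\delta$ and $d$, forcing the $\delta$-component to vanish and pinning everything down inside $C^{-1,\bullet}$.

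\textbf{Surjectivity.} Let $[\alpha] \in H_C^r$. By Lemma~\ref{lem:push_coho} (applied using exactness of $\delta$ at every $C^{p,q}$ with $p \geq 1$), I may assume $\alpha \in C^{0,r}$. Then the condition $D\alpha = 0$ splits according to bidegree as
\[
0 = D\alpha = (-1)^r \delta \alpha + d\alpha \in C^{1,r} \oplus C^{0,r+1},
\]
which forces $\delta\alpha = 0$ and $d\alpha = 0$. The first equation means $\alpha \in \ker \delta_{0,r} = C^{-1,r}$, and the second means $\alpha$ is a cocycle in the $d$-complex on $C^{-1,\bullet}$. Hence $[\alpha]$ lifts to a class in $H_{C,d}^r$ sent to the original class by $i$.

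\textbf{Injectivity.} Suppose $\alpha \in C^{-1,r}$ is a $d$-cocycle with $i[\alpha] = 0$ in $H_C^r$, so there exists $\beta \in C^{r-1}$ with $D\beta = \alpha$. Viewed in $C^r$, the element $\alpha$ lies in $C^{0,r}$ (via the inclusion $C^{-1,r} \hookrightarrow C^{0,r}$), so $D\beta \in C^{0,r}$. By Remark~\ref{rem:no_need}, the argument of Lemma~\ref{lem:push_coho} still applies and allows me to replace $\beta$ by $\beta + D\gamma$ (for suitable $\gamma \in C^{r-2}$) so that the new $\beta$ lies in $C^{0,r-1}$, while preserving $D\beta = \alpha$. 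Now splitting $D\beta = \alpha$ by bidegree gives $(-1)^{r-1}\delta\beta = 0$ in $C^{1,r-1}$ and $d\beta = \alpha$ in $C^{0,r}$. The first yields $\beta \in C^{-1,r-1}$, and the second exhibits $\alpha$ as a $d$-coboundary there, so $[\alpha] = 0$ in $H_{C,d}^r$.

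\textbf{Main obstacle.} The only delicate point is the injectivity reduction: I need Lemma~\ref{lem:push_coho} to work even though $D\beta \neq 0$. This is exactly the content of Remark~\ref{rem:no_need} — the lemma's induction on the largest nonzero $p$-index only uses that the top-right component of $D\beta$ vanishes, which holds because $D\beta = \alpha$ has $p$-index $0$. Once that observation is made, the bidegree bookkeeping closes the argument cleanly.
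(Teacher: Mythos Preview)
Your proof is correct and follows essentially the same approach as the paper: surjectivity via Lemma~\ref{lem:push_coho} and injectivity via Remark~\ref{rem:no_need}, reducing representatives to the column $C^{0,\bullet}$ and then splitting $D$ by bidegree. You have in fact spelled out the surjectivity step more carefully than the paper does (explicitly checking that the $C^{0,r}$ representative lies in $\ker\delta_{0,r}=C^{-1,r}$ and is a $d$-cocycle), which the paper leaves implicit.
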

\begin{proof}
	By Lemma \ref{lem:push_coho} the map $H_{C,d}^r\to H_C^r$ is onto. We will show that this map is one to one. Assume $[\alpha^{0,r}]\in H_{C,d}^r$ vanishes in $H_C^r$; that is, there exists $\beta\in C^{r-1}$ such that $ D \beta=\alpha^{0,r}$. 
	By Lemma \ref{lem:push_coho} and Remark \ref{rem:no_need} there exists $\gamma\in C^{r-2}$ such that $\beta^{0,r-1}=D\gamma+\beta\in C^{0,r-1}$. 
	Thus, $\alpha^{0,r}=D\beta =D\beta^{0,r-1}= d \beta^{0,r-1}$.
	Since $D\beta^{0,r-1}=\alpha^{0,r-1}$, one has $\delta\beta^{0,r-1}=0$, and thus $\alpha^{0,r-1}$ is trivial in $H_{C,d}^r$. 
\end{proof}

\subsection{The \v{C}ech-De Rham double complex}
We will start this section by defining the \v{C}ech-De Rham double complex. Let $\fU$ be an open cover of $\R^n$ that satisfies the conditions of Theorem \ref{thm:topo}. 
Choose an arbitrary order on the set $\fU$.

Consider the following double complex: 

\[\calA^{p,q}=C^p(\fU, \Omega^q_{bs}):=\bigoplus_{J\subseteq \fU, \#J=p+1}\Omega^q_{bs}(U_J),\]
where $U_J:=\bigcap_{U\in J}U$. We think of this direct sum as a subset of the direct product, and write its elements in coordinate form. 

The differential $ d= d_{p,q}:\calA^{p,q}\to \calA^{p, q+1}$ is the one defined on forms, and the differential \begin{align*}
\delta=\delta_{p,q}:\calA^{p,q}&\to \calA^{p+1,q}\\
(\omega_J)_{\#J=p+1}&\mapsto (\omega'_{J'})_{\#J'=p+2}
\end{align*}
is the one defined by \[\omega_J\in \Omega^q_{bs}(U_J),\ \ \  \omega'_{J'}:=\sum_{U\in J'}(-1)^{[U:J']}\omega_{J'-U}\in \Omega^q_{bs}\left(U_{J'}\right),\]
where $[U:J]$ is the index of $U$ in $J$ by the order induced from $\fU$; it is 0 if $U$ is the smallest element in $J$ and $p$ if it is the largest. 
Because only finitely many $\omega_J$ are nonzero and they all have bounded support, every $\omega'_{J'}$ vanishes outside a bounded set. Since $\fU$ is locally finite, only finitely many $U_{J'}$-s intersect any bounded set, and hence only finitely many $\omega'_{J'}$-s are nonzero. $\calA^{p,q}$ is the \v{C}ech-De Rham double complex.
One can verify that $\delta^2=0$ and that $\delta$ and $ d$ commute.

One property of the \v{C}ech-De Rham double complex is that $\delta$ is exact. 

\begin{thm}
	The differential $\delta$ is exact. 
\end{thm}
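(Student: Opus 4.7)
The plan is to prove exactness of $\delta$ via the classical \v{C}ech partition-of-unity homotopy, while carefully preserving the bounded-support condition built into $\Omega^*_{bs}$. The first observation is that, since $U \subseteq \conv U$ for every $U \in \fU$, local finiteness of $\{\conv U : U \in \fU\}$ implies that the cover $\fU$ itself is locally finite. Paracompactness of $\R^n$ then supplies a smooth partition of unity $\{\rho_U\}_{U\in\fU}$ subordinate to $\fU$, i.e.\ $\supp \rho_U \subseteq U$ and $\sum_U \rho_U \equiv 1$, the sum being locally finite.

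For each $p \geq 1$ I will define a homotopy operator $K \colon \calA^{p,q} \to \calA^{p-1,q}$ by
\[(K\omega)_J := \sum_{U \in \fU} \rho_U \cdot \omega_{\{U\}\cup J},\]
where $\rho_U \cdot \omega_{\{U\} \cup J}$ is extended by zero from $U_{\{U\}\cup J}$ to $U_J$ (legitimate because $\supp \rho_U \subseteq U$), and where $\omega_{\{U\}\cup J}$ is declared to be zero when $U \in J$. I then need to check that $K\omega$ really lies in $\calA^{p-1,q}$: only finitely many $(K\omega)_J$ are nonzero, because each such $J$ must have the form $J' \setminus \{U\}$ for one of the finitely many $J'$ with $\omega_{J'} \neq 0$; and each $(K\omega)_J$ is boundedly supported on $U_J$, the support being inherited from the factors $\omega_{\{U\}\cup J}$.

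The final step is the standard identity $\delta K + K\delta = \mathrm{id}$ on $\calA^{p,q}$ for $p \geq 1$: expand both sides, isolate the terms in which the summation index $U$ coincides with an element of $J$ (these collapse to $\omega_J$ after invoking $\sum_U \rho_U = 1$), and verify that the remaining terms cancel in pairs by the sign convention $[U:J]$. Exactness then follows immediately: any $\delta$-cocycle $\omega$ satisfies $\omega = \delta(K\omega)$. The one point requiring genuine care is the bookkeeping in the middle step --- preserving simultaneously the direct-sum finiteness and the bounded-support condition --- which is exactly where the distinction between $\Omega^*_{bs}$ and $\Omega^*$ interacts with the \v{C}ech machinery. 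Once that is in place, the homotopy identity is the purely formal \v{C}ech contraction and goes through unchanged.
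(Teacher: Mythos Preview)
Your approach is essentially the same as the paper's: build a partition-of-unity homotopy (the paper calls it $T$), check that bounded support and direct-sum finiteness are preserved, and derive $\delta K + K\delta = \mathrm{id}$. One correction is needed: in the paper's subset-indexing convention your $K$ must carry the sign $(-1)^{[U:J\cup\{U\}]}$, i.e.\ $(K\omega)_J = \sum_{U\notin J}(-1)^{[U:J\cup\{U\}]}\rho_U\,\omega_{J\cup\{U\}}$; without it the cross terms in $\delta K\omega + K\delta\omega$ do not cancel, because $[U:J]$ and $[U:J\cup\{V\}]$ agree precisely when $V$ comes after $U$ in the order on $\fU$, not always. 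With that sign inserted, your argument coincides with the paper's computation.
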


\begin{proof}
	For every collection of sets $J$ and sets $U\in J, V\nin J$ denote $J+V:=J\cup \{V\}$ and $J-U:=J\setminus \{U\}$. 
	Choose a partition of unity $\{\rho_U\}_{U\in \fU}$. 
	Let \[\omega=(\omega_J)_{\#J=p+1}\in C^p\left (\fU, \Omega^q_{bs}\right )\]
	such that only finitely many $\omega_J$-s are nonzero. 
	As in \cite[Prop 8.5]{BT} we define 
	\begin{align*}
	T:C^{p}\left (\fU, \Omega^q_{bs}\right )&\to C^{p-1}\left (\fU, \Omega^q_{bs}\right )\\\omega & \mapsto(\omega'_{J})_{\#J=p}\in C^{p-1}\left (\fU, \Omega^q_{bs}\right ),
	\end{align*}
	where
	\[\omega'_{J}:=\sum_{V\nin J} (-1)^{[V:J+V]}\rho_V\omega_{J+V}.\]
	
	Because $\fU$ is locally finite, only finitely many $\omega'_J$-s are nonzero.
	
	Note that \begin{align*}
	\delta T\omega=&\,
	\delta\left (\sum_{V\in \fU\setminus J}(-1)^{[V:J+V]}\rho_V\omega_{J+V}\right )_{\#J=p}\\=&
	\left (\sum_{U\in J}(-1)^{[U:J]}\sum_{V\nin J-U }(-1)^{[V:J-U+V]}\rho_V\omega_{J-U+V}\right )_{\#J=p+1}\\=&
	\left (\sum_{V=U\in J}(-1)^{[U:J]}(-1)^{[V:J]}\rho_V\omega_{J}\right )_{\#J=p+1}\\
	&+\left (\sum_{U\in J}(-1)^{[U:J]}\sum_{V\nin J }(-1)^{[V:J-U+V]}\rho_V\omega_{J-U+V}\right )_{\#J=p+1}\\=&
	\left (\sum_{V\in J}\rho_V\omega_{J}\right )_{\#J=p+1}+
	\left (\sum_{V\nin J}\rho_V\omega_{J}\right )_{\#J=p+1}\\
	&-\left (\sum_{V\nin J}\rho_V(-1)^{[V:J+V]}\sum_{U\in J+V}(-1)^{[U:J+V]}\omega_{J-U+V}\right )_{\#J=p+1}\\=&\,
	\omega-T\delta\omega
	\end{align*}
	Therefore, if $\omega\in \ker\delta$ then $\omega=\delta T\omega$, and hence $\delta$ is exact.
\end{proof}

Note also that $\ker\delta_{0,r}$ represents forms on $\fU$-elements that agree on pairwise intersections, and hence $\ker\delta_{0,r}\cong \Omega_{bs}^r(\R^n)$. 
From Corollary \ref{cor:iso_coho} we deduce that $H_\calA^r\cong H_{bs}^r(\R^n)$. 

Define the following double complex :

\[\calB^{p,q}:=\bigoplus_{J\subseteq \fU, \#J=p+1}\Omega_{bs}^q(\conv U_J),\] and define $ d, \delta$, and $D$ as for the double complex $\calA$. Denote the direct sum of the restriction transformations by $\res:\calB^{p,q}\to \calA^{p,q}$. Since $\res$ commutes with $ d, \delta$, and $D$ it define a map $\res_*:H_\calB^r\to H_\calA^r$. 

\begin{prop}\label{prop:onto}
	The map $\res_*$ is onto.
\end{prop}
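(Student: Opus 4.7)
The plan is to lift any cohomology class in $H_\calA^r$ to $\calB^r$ by taking a global representative and restricting it coordinate-wise to the convex hulls $\conv U$. The key tool is Corollary~\ref{cor:iso_coho}, which together with the identification $\ker \delta_{0,r} \cong \Omega_{bs}^r(\R^n)$ noted just above Proposition~\ref{prop:onto} implies that every class in $H_\calA^r$ is represented by a single closed, boundedly supported $r$-form $\alpha$ on $\R^n$, viewed in $\calA^{0,r}$ as the family $(\alpha|_U)_{U\in\fU}$.

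Given such $\alpha$, I define the lift $\beta\in \calB^{0,r}$ componentwise by $\beta_U := \alpha|_{\conv U}$ for each $U\in \fU$. The first task is to check that $\beta$ actually lies in the direct sum defining $\calB^{0,r}$. Since $\alpha$ has bounded (hence compact) support and the cover $\{\conv U\}_{U\in \fU}$ is locally finite by Condition~(1) of Theorem~\ref{thm:topo}, only finitely many $\conv U$ meet $\supp \alpha$, so only finitely many $\beta_U$ are nonzero.

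Next I verify that $D\beta=0$. The $d$-part vanishes because $d\alpha=0$ globally and restriction commutes with $d$. For the $\delta$-part, the component of $\delta\beta$ indexed by $J' = \{U_1,U_2\}$ lies in $\Omega_{bs}^r(\conv(U_1\cap U_2))$ and equals $\beta_{U_2}|_{\conv(U_1\cap U_2)} - \beta_{U_1}|_{\conv(U_1\cap U_2)}$; both terms restrict further to $\alpha|_{\conv(U_1\cap U_2)}$, so the difference vanishes. Unwinding definitions, $\res(\beta) = (\alpha|_U)_{U\in\fU}\in \ker \delta_{0,r}\subseteq \calA^{0,r}$ is exactly the representative of $[\alpha]$ we started with, so $\res_*[\beta] = [\alpha]$, which gives the desired surjectivity.

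There is no genuine obstacle in this argument: it rests almost entirely on the already-established identification $\ker \delta_{0,r} \cong \Omega_{bs}^r(\R^n)$ and on Corollary~\ref{cor:iso_coho}, and the rest is bookkeeping. The only point that requires any hypothesis of Theorem~\ref{thm:topo} is the finiteness of nonzero coordinates of $\beta$, which needs Condition~(1). Notably, Condition~(2) plays no role here, which is consistent with the sketch: the invariance-dimension hypothesis will be invoked later to establish exactness of $\calB$ in the $d$-direction at degree $n$ via Theorem~\ref{thm:calc_coho}.
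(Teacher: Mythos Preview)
Your proof is correct and follows essentially the same approach as the paper: represent a class in $H_\calA^r$ by a global closed form $\omega\in\Omega_{bs}^r(\R^n)$ via Corollary~\ref{cor:iso_coho}, then lift by restricting $\omega$ to each $\conv U$. You supply more detail than the paper does---in particular the finiteness check using Condition~(1) and the explicit verification that $\delta\beta=0$ via $\conv(U_1\cap U_2)\subseteq \conv U_i$---but the underlying argument is identical.
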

\begin{proof}
	Let $\alpha\in H_\calA^r$. Since $\delta$ is exact and by Corollary \ref{cor:iso_coho}, we have $H_\calA^r\cong H_{d,A}^r\cong H_{bs}^r(\R^d)$, 
	and therefore the class $\alpha$ corresponds to a class $[\omega]\in H_{bs}^r(\R^d)$. 
	Choosing $\beta:=\left (\omega|_{\conv U}\right )_{U\in \fU}\in \calB^{0,r}$ we get $\alpha=[\res\beta]$ and $\delta \beta= d \beta=D\beta=0$. In particular, $\alpha\in \rm{Im}\,\res_*$. 
\end{proof}
\begin{prop}
	At the groups $\calB^{p,q}$ of degree $n$ the differential $ d$ is exact. 
\end{prop}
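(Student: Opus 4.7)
The plan is to work componentwise in the direct sum defining $\calB^{p,q}$ and to apply Theorem \ref{thm:calc_coho} together with hypothesis (2) of Theorem \ref{thm:topo} in order to show that the relevant cohomologies vanish. I will fix $p+q=n$ with $q\ge 1$ and consider a $d$-cocycle $\omega=(\omega_J)_{\#J=p+1}\in \calB^{p,q}$. Since $d$ acts componentwise on the direct sum, each $\omega_J\in \Omega_{bs}^q(\conv U_J)$ is itself closed.

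The numerical crux is that condition (2) of Theorem \ref{thm:topo}, applied with $k=\#J=p+1\le n$, yields $\invdim \conv U_J\le n-(p+1)=q-1$. By definition $\deg \conv U_J\le \invdim \conv U_J\le q-1$, which is strictly less than $q$, so Theorem \ref{thm:calc_coho} forces $H_{bs}^q(\conv U_J)=0$. Hence each cocycle $\omega_J$ is a coboundary $d\varpi_J$ for some $\varpi_J\in \Omega_{bs}^{q-1}(\conv U_J)$.

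To assemble a global primitive $\varpi\in\calB^{p,q-1}$, I only need to respect the direct-sum constraint. Because $\calB^{p,q}$ is a direct sum, only finitely many $\omega_J$ are nonzero; I will take $\varpi_J$ as above for those finitely many indices and set $\varpi_J=0$ for the remaining ones. The resulting tuple lies in $\calB^{p,q-1}$ and satisfies $d\varpi=\omega$, giving exactness at $\calB^{p,q}$.

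I do not anticipate any substantial obstacle. Hypotheses (1) and (2) of Theorem \ref{thm:topo} were set up precisely to bound the invariance dimension of each $\conv U_J$, and they force $H_{bs}^q(\conv U_J)$ to vanish in exactly the range $q>\deg \conv U_J$ that is relevant here. The only delicate point is the bookkeeping of the direct-sum condition, which is automatic from the finite support of $\omega$.
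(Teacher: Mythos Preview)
Your proposal is correct and follows essentially the same approach as the paper: both reduce exactness to the componentwise vanishing $H_{bs}^q(\conv U_J)=0$, obtained by combining the bound $\invdim \conv U_J\le n-(p+1)=q-1$ from hypothesis (2) with Theorem \ref{thm:calc_coho}. You are in fact slightly more careful than the paper, making explicit the restriction $q\ge 1$ (needed so that $k=p+1\le n$ and hypothesis (2) applies) and the finite-support bookkeeping in the direct sum.
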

\begin{proof}
	It is enough to show that if $p+q=n$ and $J\subseteq \fU$ is of size $p+1$, then $H^q_{bs}(\conv U_J)=0$.
	By Theorem \ref{thm:calc_coho}, the only nontrivial cohomology of $\conv U_J$ may be at rank $\invdim \conv U_J$, and by the assumptions of Theorem~\ref{thm:topo} $\invdim \conv U_J\le n-(p+1)=q-1$. 
	Thus the $q$ boundedly supported cohomology of $\conv U_J$ is trivial, as desired.
\end{proof}

\begin{proof}[Proof of Theorem \ref{thm:topo}]
	Since $\deg \R^n=n$ it follows that $H_{bs}^n(\R^n)\cong \R\not\cong 0$. Since $H_{bs}^n(\R^n)\cong H_\calA^n$ we deduce that $H_\calA^n\not\cong 0$. Since $\res_*$ is onto it follows that $H_\calB^n\not\cong 0$. By Lemma \ref{lem:push_coho} and the exactness of $ d$ at the groups $\calB^{p,q}$ of degree $n$, we get that $\calB^{n,0}\not\cong 0$. Thus, for some $J\subseteq \fU$ of size $n+1$ the set $U_J$ is nonempty. 
\end{proof}
\section*{Acknowledgment}
The author wishes to thank Barak Weiss for presenting the problem, his guidance, and his comments on earlier versions of the paper. The author also thanks Roman Karasev and Lev Radzivilovsky for helpful discussions.
The research was partially supported by the ERC starter grant DLGAPS 279893.

\end{document}